 \newtheorem{thm}{Theorem}[section]
 \newtheorem{cor}[thm]{Corollary}
 \newtheorem{lem}[thm]{Lemma}
 \theoremstyle{definition}
 \theoremstyle{remark}
 \numberwithin{equation}{section}
\begin{document}

\title[]
 {On the number of primitive and  quasi-primitive irreducible characters in finite groups}

\author{RIAD YOUMBAI}
\address{School of Mathematics and Statistics,\\
Central China Normal University,\\
Wuhan 430079, China}
\email{youmbair@mails.ccnu.edu.cn}

\author{GANG CHEN}
\address{School of Mathematics and Statistics,\\
Central China Normal University,\\
Wuhan 430079, China}
\email{chengangmath@mail.ccnu.edu.cn}
\subjclass{20D45}

\keywords{Irreducible character, Primitive character, quasi-primitive character, non-vanishing element}

\dedicatory{}

\begin{abstract}
In this short note, it is proved that both the number of primitive characters and the number of quasi-primitive characters in a finite group $G$ is divisible by $|G:G'|$, where $G'$ is the derived subgroup of $G$. 
\end{abstract}

\maketitle
\section{Introduction}
Throughout this short note, $G$ will be a finite group, $G'$ the derived subgroup of $G$, and ${\rm Irr}(G)$ the set of complex irreducible characters of $G$. All other notations are taken from \cite{Is1}. 

\medskip

Recall that an irreducible character $\chi\in {\rm Irr}(G)$ is {\it primitive} if $\chi$ can not be of the form $\theta^G$ for any character $\theta$ of a proper subgroup of $G$. And $\chi$  is called {\it quasi-primitive} if $\chi_N$ is a multiple of an irreducible character for every normal subgroup $N$ of $G$. The set of primitive irreducible characters and quasi-primitive irreducible characters of $G$ are respectively denoted by ${\rm Irr}_{\rm pri}(G)$ and ${\rm Irr}_{\rm qua}(G)$. 

\medskip

Let $H$ be a subgroup of $G$.  If $G'\subseteq H$, then ${\rm Irr}(G/H)$ acts on the set ${\rm Irr}(G)$; see \cite{chen} for more details about this action and its applications.  Furthermore, one can easily see that this action induces actions of ${\rm Irr}(G/H)$ on ${\rm Irr}_{\rm pri}(G)$ and ${\rm Irr}_{\rm qua}(G)$.    We denote the orbit and the stabilizer of an irreducible character $\chi$ of $G$ by ${\rm Orb}_H(\chi)$ and ${\rm Stab}_H(\chi)$,  respectively. For an irreducible character $\chi\in {\rm Irr}(G)$,  any element $x\in G$ satisfying $\chi(x)\ne 0$ is called a  {\it vanishing-off element} of $\chi$ and the subgroup ${\bf V}(\chi)$ generated by all vanishing-off elements  is called the {\it vanishing-off subgroup} of $\chi$.

\medskip

In this short note, we prove that the restriction of a quasi-primitive irreducible character to the derived subgroup is irreducible. Then we prove that both the number of primitive characters and the number of quasi-primitive characters in a finite group $G$ is divisible by $|G:G'|$, where $G'$ is the derived subgroup of $G$. 

\section{Main results and Proofs}
We start by giving the following lemma, which is \cite[Problem 6.2]{Is1}.

\begin{lem}
Let $G$ be a finite group and $H$ a subgroup containing $G'$. Let $\chi$ be an irreducible character of $G$ and write $Orb_H(\chi)=\{\lambda_1\chi,\lambda_2\chi,...,\lambda_k\chi\}$ for some $\lambda_i\in {\rm Irr}(G/H)$. Let $\xi$ be an irreducible constituent of $\chi_H$, then
$$
e(\lambda_1\chi+\lambda_2\chi+...+\lambda_k\chi)=\xi^G
$$ for some positive integer $e$. In particular, $\lambda_1\chi+\lambda_2\chi+...+\lambda_k\chi$ vanishes on $G-H$.

\end{lem}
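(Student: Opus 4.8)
The plan is to exploit the fact that $G'\subseteq H$ forces $H\trianglelefteq G$ with $G/H$ abelian, so that Clifford's theorem applies to $\chi_H$ while, at the same time, every element of ${\rm Irr}(G/H)$ is a linear character of $G$ trivial on $H$, whose product with $\chi$ is again irreducible. Write $A={\rm Irr}(G/H)$ and $\Theta=\lambda_1\chi+\lambda_2\chi+\cdots+\lambda_k\chi$. First I would record two consequences of Frobenius reciprocity. Since each $\lambda_i$ is trivial on $H$, we have $(\lambda_i\chi)_H=\chi_H$, whence $[\xi^G,\lambda_i\chi]=[\xi,(\lambda_i\chi)_H]=[\xi,\chi_H]=e_0$, where $e_0:=[\xi,\chi_H]>0$. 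Thus every orbit member already occurs in $\xi^G$ with the same multiplicity $e_0$; the real content of the lemma is that $\xi^G$ has \emph{no further} constituents.

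The key device for ruling out extra constituents is to evaluate the product $\chi\cdot(1_H)^G$ in two different ways. On one hand, because $G/H$ is abelian, $(1_H)^G=\sum_{\lambda\in A}\lambda$, so $\chi\cdot(1_H)^G=\sum_{\lambda\in A}\lambda\chi=|{\rm Stab}_H(\chi)|\cdot\Theta$, since each orbit member is hit exactly $|{\rm Stab}_H(\chi)|$ times in the sum over all of $A$. On the other hand, the standard identity $\psi\cdot\phi^G=(\psi_H\,\phi)^G$ applied with $\phi=1_H$ gives $\chi\cdot(1_H)^G=(\chi_H)^G$. Now I apply Clifford's theorem: $\chi_H=e_0\sum_{j=1}^{t}\xi_j$, where $\xi_1,\dots,\xi_t$ are the distinct $G$-conjugates of $\xi$; since conjugate characters of a normal subgroup induce identically, $(\chi_H)^G=e_0\,t\,\xi^G$. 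Comparing the two evaluations yields $e_0\,t\,\xi^G=|{\rm Stab}_H(\chi)|\,\Theta$, which exhibits $\xi^G$ as a scalar multiple of $\Theta$ and therefore confines all constituents of $\xi^G$ to the orbit. Matching this scalar against the multiplicity $e_0$ found above identifies it, so $\xi^G=e_0\,\Theta$, which is the displayed equation with $e=e_0$.

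The ``in particular'' clause is then immediate: $\xi^G$ is induced from the subgroup $H$, hence vanishes on $G\setminus\bigcup_g H^g=G\setminus H$, the union of conjugates collapsing to $H$ because $H$ is normal; dividing by $e$ shows that $\Theta$ vanishes on $G-H$ as well.

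I expect the only genuine subtlety to be the bookkeeping in the two evaluations of $\chi\cdot(1_H)^G$: the point is to sum over \emph{all} of $A$ rather than merely over the orbit, which is what produces the clean factor $|{\rm Stab}_H(\chi)|$ and lets the induced-character identity $\psi\cdot\phi^G=(\psi_H\,\phi)^G$ do the work. Once that identity and the normality of $H$ are in place, the integrality of $e$ and the vanishing statement are entirely formal.
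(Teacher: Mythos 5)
Your argument is correct and complete. Note, however, that the paper does not actually prove this lemma: it simply quotes it as Problem 6.2 of Isaacs's book, so there is no in-paper proof to compare against. Your two evaluations of $\chi\cdot(1_H)^G$ are both sound: the identity $(\phi\,\psi_H)^G=\phi^G\psi$ holds for arbitrary class functions, $(1_H)^G=\sum_{\lambda\in{\rm Irr}(G/H)}\lambda$ because $G/H$ is abelian, and Clifford's theorem applies since $G'\subseteq H$ forces $H\trianglelefteq G$. Combined with the Frobenius reciprocity computation $[\xi^G,\lambda_i\chi]=[\xi,\chi_H]=e_0$, the comparison $e_0 t\,\xi^G=|{\rm Stab}_H(\chi)|\,\Theta$ correctly pins down the scalar as $e_0$ (and yields the pleasant byproduct $|{\rm Stab}_H(\chi)|=e_0^2 t$, consistent with the degree count). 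The vanishing clause is also handled correctly via normality of $H$. A slightly shorter route to the same conclusion, closer to the usual solution of Isaacs's exercise, is to take an arbitrary constituent $\psi$ of $\xi^G$ and compute $0<[\psi_H,\chi_H]=[\psi\bar\chi,(1_H)^G]=\sum_{\lambda}[\psi,\lambda\chi]$, which forces $\psi=\lambda\chi$ for some $\lambda$; but your product-identity bookkeeping achieves exactly the same thing and additionally identifies $e$ explicitly as $[\chi_H,\xi]$.
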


\begin{cor}

Let $G'\subseteq H\subseteq G$ and $\xi\in {\rm Irr}(H)$, then the set of irreducible constituents of $\xi^G$ forms an orbit under the action of ${\rm Irr}(G/H)$ on ${\rm Irr}(G)$.
\end{cor}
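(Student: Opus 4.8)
The plan is to reduce the statement directly to the preceding lemma by fixing an arbitrary irreducible constituent of $\xi^G$ and showing that the complete set of constituents is a single orbit through it.

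First I would observe that $\xi^G$ is a nonzero character of $G$, so it has at least one irreducible constituent; fix one and call it $\chi\in{\rm Irr}(G)$. By Frobenius reciprocity, $[\xi^G,\chi]=[\xi,\chi_H]$, and since $\chi$ is a constituent of $\xi^G$ this inner product is positive. Hence $\xi$ is an irreducible constituent of $\chi_H$, which is precisely the hypothesis needed to apply the lemma to the pair $(\chi,\xi)$.

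Next I would write ${\rm Orb}_H(\chi)=\{\lambda_1\chi,\lambda_2\chi,\ldots,\lambda_k\chi\}$ with $\lambda_i\in{\rm Irr}(G/H)$ and invoke the lemma to obtain $\xi^G=e(\lambda_1\chi+\lambda_2\chi+\cdots+\lambda_k\chi)$ for some positive integer $e$. Reading off the right-hand side, the irreducible constituents of $\xi^G$ are exactly $\lambda_1\chi,\ldots,\lambda_k\chi$, each occurring with multiplicity $e$; that is, they are precisely the members of ${\rm Orb}_H(\chi)$. By definition this is an orbit of the action of ${\rm Irr}(G/H)$ on ${\rm Irr}(G)$, which is the assertion.

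I expect no serious obstacle here, since the substantive content is carried entirely by the lemma. The only point deserving a word of care is that the orbit produced does not depend on which initial constituent $\chi$ was chosen; but this is automatic, because any other irreducible constituent of $\xi^G$ already lies in ${\rm Orb}_H(\chi)$ by the displayed identity, and the orbit of any member of ${\rm Orb}_H(\chi)$ is again ${\rm Orb}_H(\chi)$.
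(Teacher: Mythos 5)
Your proposal is correct and follows exactly the route the paper intends: the corollary is stated without proof as an immediate consequence of Lemma 2.1, and your argument (pick a constituent $\chi$ of $\xi^G$, use Frobenius reciprocity to see $\xi$ is a constituent of $\chi_H$, then read off the constituents of $\xi^G$ from the identity $e(\lambda_1\chi+\cdots+\lambda_k\chi)=\xi^G$) is precisely the implicit derivation. Nothing is missing.
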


The following is \cite[Lemma 7.1 (c)]{Is2}. We give a proof for the reader's convenience. 

\begin{thm}\label{1150c}
Let $\chi\in {\rm Irr}(G)$ and $G'\subseteq H$ be a subgroup of $G$. Then 
$$
H{\bf V}(\chi)=G \text{ if and only if }\chi_{H} \text{ is irreducible.}
$$
Furthermore, if one of the equivalent condition happens, then $	{\rm Stab}_H(\chi)=\{1_G\}$. 

\end{thm}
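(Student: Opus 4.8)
The plan is to translate the whole statement into the language of the ${\rm Irr}(G/H)$-action and then reduce it to a single inner-product identity. The starting point is to pin down the stabilizer explicitly. Since $G'\subseteq H$, the quotient $G/H$ is abelian, so every $\lambda\in{\rm Irr}(G/H)$ is linear and $\lambda\chi\in{\rm Irr}(G)$. Writing out $\lambda\chi=\chi$ pointwise shows that $\lambda$ fixes $\chi$ exactly when $\lambda(x)=1$ for every $x$ with $\chi(x)\ne 0$, i.e. exactly when $\lambda$ is trivial on ${\bf V}(\chi)$. Because ${\bf V}(\chi)$ is generated by a union of conjugacy classes it is normal in $G$, and since $H$ is also normal in $G$ the product $H{\bf V}(\chi)$ is a subgroup containing $H$. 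The linear characters of the abelian group $G/H$ that are trivial on $H{\bf V}(\chi)/H$ are precisely the characters of $G/H{\bf V}(\chi)$, so I would record the count $|{\rm Stab}_H(\chi)|=|G:H{\bf V}(\chi)|$.

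The central step is to establish the identity $[\chi_H,\chi_H]_H=|{\rm Stab}_H(\chi)|$, which I would prove by evaluating $\sum_{\lambda\in{\rm Irr}(G/H)}[\lambda\chi,\chi]_G$ in two ways. On one hand each $\lambda\chi$ is irreducible, so $[\lambda\chi,\chi]_G$ equals $1$ when $\lambda\in{\rm Stab}_H(\chi)$ and $0$ otherwise, and the sum is $|{\rm Stab}_H(\chi)|$. On the other hand, expanding the inner products and interchanging the order of summation gives $\frac1{|G|}\sum_{g\in G}|\chi(g)|^2\sum_{\lambda}\lambda(g)$; orthogonality of the (linear) characters of $G/H$ collapses the inner sum to $|G:H|$ when $g\in H$ and to $0$ otherwise, leaving exactly $\frac1{|H|}\sum_{h\in H}|\chi(h)|^2=[\chi_H,\chi_H]_H$. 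Equating the two evaluations yields the identity.

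With these two facts the theorem falls out at once: $\chi_H$ is irreducible iff $[\chi_H,\chi_H]_H=1$ iff $|{\rm Stab}_H(\chi)|=1$ iff $|G:H{\bf V}(\chi)|=1$, that is, iff $H{\bf V}(\chi)=G$; and when this holds the stabilizer has order one, so ${\rm Stab}_H(\chi)=\{1_G\}$, which is the final clause. I expect the only real subtlety to be bookkeeping rather than a genuine obstacle: checking that $H{\bf V}(\chi)$ is honestly a subgroup (using that both factors are normal in $G$) and that the characters trivial on ${\bf V}(\chi)$ are indeed counted by $|G:H{\bf V}(\chi)|$. A second route, closer to the machinery already set up in the excerpt, would bypass the inner-product identity and use the Lemma and Corollary together with orbit–stabilizer: the orbit ${\rm Orb}_H(\chi)$ has size $|H{\bf V}(\chi):H|$, and a degree count inside $\xi^G=e(\lambda_1\chi+\cdots+\lambda_k\chi)$ shows $\chi_H$ is irreducible precisely when this orbit is as large as possible. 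I would present the inner-product argument as the cleaner of the two.
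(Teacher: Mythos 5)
Your proof is correct, and it takes a genuinely different route from the paper's. The paper argues the two directions separately through the induced-character machinery of Lemma 2.1: for the forward direction it first shows ${\rm Stab}_H(\chi)=\{1_G\}$ by writing an arbitrary $g\in G=H{\bf V}(\chi)$ as a product of generators on which $\lambda$ must equal $1$, and then runs a degree count on $\xi^G=e(\lambda_1\chi+\cdots+\lambda_k\chi)$ to force $e=1$ and $\chi(1)=\xi(1)$; for the converse it sets $K=H{\bf V}(\chi)$, notes $\chi$ vanishes off $K$, and derives the contradiction $\chi(1)=|G:K|\chi(1)$. You instead bypass Lemma 2.1 entirely and prove the single quantitative identity $[\chi_H,\chi_H]_H=|{\rm Stab}_H(\chi)|=|G:H{\bf V}(\chi)|$ by evaluating $\sum_{\lambda\in{\rm Irr}(G/H)}[\lambda\chi,\chi]_G$ two ways and invoking column orthogonality for the abelian group $G/H$; all three assertions of the theorem (both implications and the triviality of the stabilizer) then fall out of the one equation. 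Your computation checks out: the identification of ${\rm Stab}_H(\chi)$ with the characters trivial on $H{\bf V}(\chi)$ is right (triviality on $H$ is automatic), ${\bf V}(\chi)$ is normal as claimed, and the orthogonality collapse is correct. What your route buys is a sharper statement --- the exact value of $[\chi_H,\chi_H]_H$, not just whether it equals $1$ --- and a uniform treatment of both directions; what the paper's route buys is that it exercises Lemma 2.1, which is reused verbatim in the proof of Theorem 2.7, so the authors get that machinery warmed up for free. The ``second route'' you sketch at the end is essentially the paper's argument.
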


\begin{proof} Assume that  $H{\bf V}(\chi)=G$ and consider the action of ${\rm Irr}(G/H)$ on ${\rm Irr}(G)$. 	We claim that $$
{\rm Stab}_H(\chi)=\{1_G\}. 
$$ 
	To see this, suppose $\lambda\chi=\chi$ for some $\lambda\in {\rm Irr}(G/H)$. Let $g\in G=H{\bf V}(\chi)$, one can write $g=\prod_{i=1}^n x_i$ for some $x_1,x_2,...,x_n$, where $x_i$ or $x_i^{-1}$ is a non-vanshing element of $\chi$ or belongs to $H$. It follows that $\lambda(x_i)=1$ for all $i$ and so $\lambda(g)=1$. Thus we have $\lambda=1_G$ as claimed.
	
	\medskip
	
Let $\xi$ be an irreducible constituent of $\chi_H$ and $e=[\chi_H,\xi]$. Since the orbit of $\chi$ contains $|{\rm Irr}(G/H):\{1_G\}|=|{\rm Irr}(G/H)|=|G:H|$ elements, we have
$$|G:H|\xi(1)=\xi^{G}(1)=e|G:H|\chi(1)$$ where the second equality follows by Lemma 2.1. Since $\chi(1)\geq \xi(1)$, we have  $e=1$ and $\chi(1)=\xi(1)$. Thus,  $\chi_H$ is irreducible.

\medskip

Conversely, assume that  $\chi_H$ is irreducible. Towards a contradiction, suppose 
that $H{\bf V}(\chi)\ne  G$.  Denote $H{\bf V}(\chi)$ by $K$. Then $\chi$ is zero in $G-K$ and one can see that $\lambda\chi=\chi$ for any $\lambda\in {\rm Irr}(G/K)$. 

\medskip

By Lemma 2.1 we have $a\chi=\xi^G$ for some positive integer $a$ and some irreducible character $\xi$ of $K$. Since $\chi_{H}$ is irreducible, so is $\chi_K$.  Write $\chi_K=\xi$. It follows that $a=1$ and 
$$
\chi(1)=\xi^G(1)=|G:K|\xi(1)=|G:K|\chi(1),
$$
which is a contradiction as we are assuming that $K$ is proper in $G$. The last statement follows by the first paragraph of the proof. 
\end{proof}

\medskip

The next Lemma is Corollary 11.22 of \cite{Is1}. 
\begin{lem} Let $H$ be a normal subgroup of $G$ with $G/H$ cyclic and let $\varphi\in {\rm Irr}(H)$ be invariant in $G$. Then $\varphi$ is extendible to $G$.
\end{lem}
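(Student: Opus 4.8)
The plan is to prove this directly by building a matrix representation of $G$ that extends one affording $\varphi$, pushing it out along the single generator of the cyclic quotient. Set $n=|G:H|$ and choose $g\in G$ so that $gH$ generates $G/H$; then $g^n\in H$ and $G=\bigcup_{j=0}^{n-1}Hg^j$ is a disjoint union. Fix a matrix representation $\mathcal{D}$ of $H$ affording $\varphi$. Since $\varphi$ is invariant in $G$, the conjugate representation $h\mapsto\mathcal{D}(g^{-1}hg)$ is equivalent to $\mathcal{D}$, so there is an invertible matrix $P$ with $P^{-1}\mathcal{D}(h)P=\mathcal{D}(g^{-1}hg)$ for every $h\in H$. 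My aim is to realise $P$, after a suitable rescaling, as the image of $g$ under an extension of $\mathcal{D}$ to all of $G$.

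The crux is to make the candidate image of $g$ compatible with the relation $g^n\in H$. Iterating the intertwining identity $k$ times gives $P^{-k}\mathcal{D}(h)P^k=\mathcal{D}(g^{-k}hg^k)$, and taking $k=n$ turns the right-hand side into $\mathcal{D}(g^n)^{-1}\mathcal{D}(h)\mathcal{D}(g^n)$, precisely because $g^n$ now lies in $H$. Rearranging shows that $\mathcal{D}(g^n)P^{-n}$ commutes with $\mathcal{D}(h)$ for all $h\in H$, so by Schur's Lemma it is a scalar matrix, say $\lambda I$; equivalently $\mathcal{D}(g^n)=\lambda P^n$. This is exactly where the cyclicity of $G/H$ enters: there is a single generator, hence only a single scalar obstruction $\lambda$ to eliminate, rather than a genuine $2$-cocycle.

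I then rescale. Because the complex field is algebraically closed I may choose $c$ with $c^n=\lambda$ and put $Q=cP$. The scalar cancels in the intertwining relation, so $Q^{-1}\mathcal{D}(h)Q=\mathcal{D}(g^{-1}hg)$ still holds, while $Q^n=c^nP^n=\lambda P^n=\mathcal{D}(g^n)$. Now define $\mathcal{E}(hg^j)=\mathcal{D}(h)Q^j$ for $h\in H$ and $0\le j<n$. Using the two relations $Q^j\mathcal{D}(h)=\mathcal{D}(g^jhg^{-j})Q^j$ and $Q^n=\mathcal{D}(g^n)$, a routine verification (writing $j_1+j_2=qn+r$ and reducing the exponent modulo $n$) shows $\mathcal{E}$ is a homomorphism, hence a representation of $G$ whose restriction to $H$ is $\mathcal{D}$. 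Its character $\chi$ therefore satisfies $\chi_H=\varphi$; since $\chi_H$ is irreducible, $\chi$ cannot decompose, so $\chi\in{\rm Irr}(G)$ is the desired extension of $\varphi$.

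I expect the only genuine obstacle to be the scaling argument in the middle paragraph, namely confirming that $\mathcal{D}(g^n)P^{-n}$ is scalar and that the correcting root $c$ exists. Everything after that is bookkeeping: the homomorphism check for $\mathcal{E}$ amounts to pushing powers of $Q$ past $\mathcal{D}(h)$ via the conjugation relation and collapsing $Q^{qn}=\mathcal{D}(g^n)^q$ back into the $H$-part.
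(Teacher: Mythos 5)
Your proof is correct, but note that the paper itself gives no argument for this lemma at all: it simply cites Corollary 11.22 of Isaacs' \emph{Character Theory of Finite Groups}. Your construction is therefore a genuinely different (and self-contained) route from the one the reader of the paper would be sent to. Isaacs derives the corollary from the machinery of character triples and factor sets: extendibility is governed by a class in $H^2(G/H,\mathbb{C}^{\times})$, and this group vanishes when $G/H$ is cyclic. You instead build the extension by hand, and your argument is sound at every step: the intertwiner $P$ exists because $\varphi^g=\varphi$; iterating the intertwining relation $n$ times and using $g^n\in H$ shows $\mathcal{D}(g^n)P^{-n}$ centralizes the irreducible representation $\mathcal{D}$, so Schur's Lemma makes it a nonzero scalar $\lambda$; the rescaling $Q=cP$ with $c^n=\lambda$ kills the obstruction; and the verification that $\mathcal{E}(hg^j)=\mathcal{D}(h)Q^j$ is multiplicative is exactly the bookkeeping you describe (with $q\in\{0,1\}$ since $0\le j_1,j_2<n$, and $Q^{qn}=\mathcal{D}(g^n)^q$ absorbed into the $H$-part). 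The final observation that $\mathcal{E}$ is irreducible because its restriction to $H$ already is closes the argument. What the direct construction buys is elementarity and transparency --- it isolates the single scalar obstruction and removes it with an $n$-th root, with no cohomology needed; what the cohomological route buys is generality, since it explains uniformly when extensions exist for arbitrary quotients and situates the cyclic case as the instance where the Schur multiplier of the quotient is trivial.
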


\medskip

The following is the first main result of this short note. 

\begin{thm}\label{1149b}
Let $\chi$ be a quasi-primitive irreducible character of $G$. Then $\chi_{G^{'}}$ is irreductible.
\end{thm}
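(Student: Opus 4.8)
The plan is to prove that the (a priori) multiplicity $e$ appearing in the homogeneous restriction $\chi_{G'}=e\xi$ is forced to be $1$, and I would do this by climbing a chain of normal subgroups from $G'$ up to $G$, controlling this multiplicity one cyclic step at a time. To set up the data: since $G'\trianglelefteq G$ and $\chi$ is quasi-primitive, $\chi_{G'}=e\xi$ for some $\xi\in{\rm Irr}(G')$ and some positive integer $e$, and the claim is exactly that $e=1$. Because $\chi_{G'}$ is homogeneous, $\xi$ is its unique irreducible constituent; as $G$ permutes the constituents of $\chi_{G'}$ it must fix $\xi$, so $\xi$ is $G$-invariant.

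The next step exploits that $G/G'$ is abelian. Every subgroup lying between $G'$ and $G$ is then normal in $G$, and since a finite abelian group admits a subnormal series with cyclic successive quotients, I can fix
\[
G'=N_0\trianglelefteq N_1\trianglelefteq\cdots\trianglelefteq N_r=G,\qquad N_{i+1}/N_i\ \text{cyclic},\quad N_i\trianglelefteq G .
\]
Applying quasi-primitivity to each (normal) $N_i$ gives $\chi_{N_i}=e_i\xi_i$ with $\xi_i\in{\rm Irr}(N_i)$ and $e_i\geq 1$, and, exactly as above, each $\xi_i$ is $G$-invariant. At the two ends of the chain one has $e_0=e$, while $\chi_{N_r}=\chi$ is irreducible, so $e_r=1$.

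The heart of the argument is a single cyclic step. Fix $i$; then $\xi_i$ is invariant in $N_{i+1}$ and $N_{i+1}/N_i$ is cyclic, so by the preceding Lemma (Corollary 11.22 of \cite{Is1}) the character $\xi_i$ extends to $N_{i+1}$. Restricting $\chi_{N_{i+1}}=e_{i+1}\xi_{i+1}$ down to $N_i$ shows that $(\xi_{i+1})_{N_i}$ is a multiple of $\xi_i$, i.e. $\xi_{i+1}$ lies over $\xi_i$; but once $\xi_i$ extends, Gallagher's theorem (\cite[Corollary 6.17]{Is1}) forces every irreducible character over $\xi_i$ to restrict to $\xi_i$ with multiplicity one, whence $(\xi_{i+1})_{N_i}=\xi_i$. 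Comparing $\chi_{N_i}=(\chi_{N_{i+1}})_{N_i}=e_{i+1}\xi_i$ with $\chi_{N_i}=e_i\xi_i$ yields $e_i=e_{i+1}$. Propagating this equality down from $e_r=1$ gives $e=e_0=e_1=\cdots=e_r=1$, so $\chi_{G'}=\xi$ is irreducible.

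I expect the main obstacle to be precisely this cyclic step, namely justifying the multiplicity-one restriction $(\xi_{i+1})_{N_i}=\xi_i$: this is where extendibility (Corollary 11.22) must be combined with Gallagher's description of the irreducible characters lying over an extendible invariant character. A second point needing care — and the exact place where quasi-primitivity is indispensable, rather than mere $G$-invariance of $\xi$ — is that $\chi$ restricts \emph{homogeneously} at every intermediate $N_i$; this is what keeps a single $G$-invariant $\xi_i$ available to extend at the following step. Without homogeneity the restriction $\chi_{N_i}$ could split into several $G$-conjugate constituents, no single invariant character would be available to feed into the next application of Corollary 11.22, and the induction would collapse.
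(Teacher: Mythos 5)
Your argument is correct, but it takes a genuinely different route from the paper. The paper never touches the multiplicities $e_i$ along a chain: instead it shows that ${\bf V}(\chi)G'=G$ by assuming otherwise, trapping ${\bf V}(\chi)G'$ inside a subgroup $K$ of prime index, using the vanishing of $\chi$ off $K$ together with its Lemma 2.1 to write $a\chi=\xi^G$ for $\xi\in{\rm Irr}(K)$, and then deriving the degree contradiction $\chi(1)=|G:K|\chi(1)$ from quasi-primitivity and extendibility over the cyclic quotient $G/K$; the irreducibility of $\chi_{G'}$ then drops out of its Theorem 2.3 (the equivalence $H{\bf V}(\chi)=G\iff\chi_H$ irreducible). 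You instead run a direct induction up a cyclic series $G'=N_0\trianglelefteq\cdots\trianglelefteq N_r=G$, using quasi-primitivity to get homogeneous restrictions $\chi_{N_i}=e_i\xi_i$ with $G$-invariant $\xi_i$, extendibility (Corollary 11.22 of \cite{Is1}) plus Gallagher to force $(\xi_{i+1})_{N_i}=\xi_i$, and hence $e_i=e_{i+1}$ at every step, propagating $e_r=1$ down to $e_0=1$. Both proofs lean on the same two engines (Clifford theory for normal subgroups containing $G'$, and extendibility over cyclic quotients), but yours avoids the vanishing-off machinery entirely and is in that sense more self-contained and elementary. What you lose is the byproduct the paper exploits later: its proof establishes ${\bf V}(\chi)G'=G$ for quasi-primitive $\chi$, which is exactly the inclusion ${\rm Irr}_{\rm qua}(G)\subseteq{\rm Irr}_{G'}(G)$ recorded in the final remarks and which feeds (via Theorem 2.3) into the semiregularity argument for the counting theorem; with your proof one would have to recover that fact separately by running Theorem 2.3 in the reverse direction from the now-known irreducibility of $\chi_{G'}$.
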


\begin{proof}
Let $\chi$ be as in the assumption. We claim that ${\bf V}(\chi)G'=G$.  

\medskip

If this is false, one can choose ${\bf V}(\chi)G'\subseteq K\subseteq G$ with $|G:K|$ a prime.
Since $\chi(g)=0$ for all $g\in G-K$, $Orb_{K}(\chi)=\{\chi\}$ and so by Lemma 2.1, we have $a\chi=\xi^G$ for some positive integer $a$ and an irreducible character $\xi$ of $K$.

\medskip

 Since $\chi$ is quasi-primitive, $\xi$ is invariant in $G$ and thus is extandible by Lemma 2.6. It follows that $a=1$ and 
$$
\chi(1)=\xi^G(1)=|G:K|\xi(1)=|G:K|\chi(1).
$$
which is a contradiction as we are assuming that $K$ is proper in $G$. Thus,  ${\bf V}(\chi)G'=G$ and the result follows by Theorem 2.3.
\end{proof}

Since every primitive irreducible character is quasi-primitive by \cite[Corollary 6.12]{Is1}, the following corollary follows by Theorem \ref{1149b}.

\begin{cor}\label{1148a}
Let $\chi$ be a primitive irreducible character of a finite group $G$. Then $\chi_{G^{'}}$ is irreducible.
\end{cor}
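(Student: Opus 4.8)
The plan is to derive this corollary directly from Theorem \ref{1149b} by showing that the hypothesis of primitivity is stronger than quasi-primitivity, so that the quasi-primitive result applies verbatim. The key input is the standard implication that every primitive irreducible character is quasi-primitive, recorded in \cite[Corollary 6.12]{Is1}; once this is in hand, no further argument specific to primitive characters is needed.

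Concretely, I would first observe that if $\chi \in {\rm Irr}_{\rm pri}(G)$, then by \cite[Corollary 6.12]{Is1} we have $\chi \in {\rm Irr}_{\rm qua}(G)$, i.e.\ $\chi$ is quasi-primitive. The second and final step is simply to invoke Theorem \ref{1149b}, which asserts that the restriction $\chi_{G'}$ of any quasi-primitive irreducible character to the derived subgroup $G'$ is irreducible. Combining these two steps yields that $\chi_{G'}$ is irreducible, as claimed.

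Since the corollary is an immediate specialization of Theorem \ref{1149b} along the inclusion ${\rm Irr}_{\rm pri}(G) \subseteq {\rm Irr}_{\rm qua}(G)$, there is essentially no obstacle at the level of this statement: all the real content has already been established in Theorem \ref{1149b} (and ultimately in Theorem \ref{1150c}, which translates irreducibility of the restriction into the vanishing-off condition ${\bf V}(\chi)G' = G$). The only point worth a moment's care is to confirm that the cited implication is being applied with the correct notion of quasi-primitivity, namely that $\chi_N$ is a multiple of an irreducible character for every normal subgroup $N$; this matches the definition used in the statement of Theorem \ref{1149b}, so the chain of implications is valid and the corollary follows in two lines.
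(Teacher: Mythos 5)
Your proposal is correct and matches the paper exactly: the paper also derives this corollary by citing \cite[Corollary 6.12]{Is1} for the implication that primitive characters are quasi-primitive and then applying Theorem \ref{1149b}. No further comment is needed.
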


\begin{cor} Metabelian groups are M-groups.
\end{cor}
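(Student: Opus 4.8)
The plan is to prove by induction on $|G|$ that every $\chi\in{\rm Irr}(G)$ is monomial, the decisive input being that Corollary~\ref{1148a} forces a primitive character of a metabelian group to be linear. Before starting the induction I would record the routine fact that subgroups of metabelian groups are again metabelian: if $G''=1$ and $H\le G$, then $H'\le G'$, and since $G'$ is abelian so is $H'$, whence $H''=1$. This is precisely what lets the induction hypothesis be applied to the proper subgroups that arise.

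The crucial observation is that a primitive irreducible character $\chi$ of a metabelian group $G$ must be linear. Indeed, by Corollary~\ref{1148a} the restriction $\chi_{G'}$ is irreducible; but $G'$ is abelian, so every irreducible character of $G'$ has degree $1$. As $\chi_{G'}$ is irreducible of degree $\chi(1)$, this forces $\chi(1)=1$, so $\chi$ is linear and therefore trivially monomial. This single step is where the real content of the corollary lies: it converts primitivity into linearity as soon as $G'$ is abelian.

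For the inductive step, take $\chi\in{\rm Irr}(G)$ with $G$ metabelian. If $\chi$ is primitive, the previous paragraph finishes the case. Otherwise $\chi=\theta^G$ for some character $\theta$ of a proper subgroup $H<G$, and the irreducibility of $\chi$ forces $\theta\in{\rm Irr}(H)$ (a decomposition of $\theta$ would induce a decomposition of $\chi$). Since $H$ is metabelian and $|H|<|G|$, the induction hypothesis yields a subgroup $U\le H$ and a linear $\lambda\in{\rm Irr}(U)$ with $\theta=\lambda^H$. By transitivity of induction, $\chi=\theta^G=(\lambda^H)^G=\lambda^G$, so $\chi$ is induced from a linear character of $U\le G$, i.e. $\chi$ is monomial. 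Hence every irreducible character of $G$ is monomial and $G$ is an M-group.

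I expect no substantial obstacle here, since the heart of the matter has been isolated in Corollary~\ref{1148a}; the remaining points are standard and lightweight, namely that an irreducible character induced from a proper subgroup forces the inducing character to be irreducible, and that metabelianness passes to subgroups. The only thing needing care is organising the induction so that the primitive case (handled by linearity) and the imprimitive case (handled by the induction hypothesis together with transitivity of induction) fit together cleanly.
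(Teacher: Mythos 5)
Your proof is correct and rests on exactly the same essential input as the paper's: Corollary~\ref{1148a} together with the abelianness of $G'$ forces a primitive character of a metabelian group to be linear, and transitivity of induction plus the heredity of metabelianness does the rest. The paper packages the reduction by choosing a subgroup minimal among those from which $\chi$ is induced (whose inducing character is then primitive), whereas you induct on $|G|$; this is only an organizational difference, not a different route.
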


\begin{proof} Let $\chi$ be a nonlinear irreducible character of the metabelian group $G$. Since $G'$ is abelian, the restriction of $\chi$ to $G'$ is reducible, and so by Corollary \ref{1148a}, $\chi$ is not primitive. Let $H<G$ be minimal such that there exists $\eta\in {\rm Irr}(H)$ with $\eta^G=\chi$. By transitivity of the induction, $\eta$ is primitive. Since subgroups of metabelian groups are metabelian, we conclude that $\eta$ is linear.

\end{proof}

\medskip

The following is the second main result of this short note.

\begin{thm}Let $G$ be a finite group.  Then ${\rm gcd}(|{\rm Irr}_{\rm pri}(G)|, |{\rm Irr}_{\rm qua}(G)|)$ is divisible by $|G:G'|$.
\end{thm}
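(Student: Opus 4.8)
The plan is to show that $|G:G'|$ divides each of $|{\rm Irr}_{\rm pri}(G)|$ and $|{\rm Irr}_{\rm qua}(G)|$ separately; since $|G:G'|$ divides both numbers, it divides their gcd, which is exactly the claim. So it suffices to prove the divisibility for an arbitrary one of these two sets, and the argument will be identical for both because the key input applies equally to primitive and quasi-primitive characters.

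The strategy is to exhibit a free action of the abelian group ${\rm Irr}(G/G')$ on the relevant set of characters, so that the set decomposes into orbits all of size $|{\rm Irr}(G/G')| = |G:G'|$, forcing the cardinality to be a multiple of $|G:G'|$. Take $H = G'$ in the action of ${\rm Irr}(G/H)$ on ${\rm Irr}(G)$ described in the introduction. First I would recall that this action restricts to ${\rm Irr}_{\rm pri}(G)$ and to ${\rm Irr}_{\rm qua}(G)$, as already noted in the excerpt. The heart of the matter is \emph{freeness}: I must check that the stabilizer ${\rm Stab}_{G'}(\chi)$ is trivial for every $\chi$ in the set under consideration. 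This is where Theorem~\ref{1149b} and Corollary~\ref{1148a} enter. Indeed, if $\chi$ is quasi-primitive (in particular if it is primitive), then $\chi_{G'}$ is irreducible by Theorem~\ref{1149b}; by Theorem~\ref{1150c} the irreducibility of $\chi_{G'}$ is equivalent to ${\bf V}(\chi)G' = G$, and the final sentence of Theorem~\ref{1150c} then gives ${\rm Stab}_{G'}(\chi) = \{1_G\}$. Thus every orbit of ${\rm Irr}(G/G')$ on ${\rm Irr}_{\rm qua}(G)$ (and on ${\rm Irr}_{\rm pri}(G)$) has full size $|{\rm Irr}(G/G')| = |G:G'|$.

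Concretely, I would argue as follows. Fix $\chi \in {\rm Irr}_{\rm qua}(G)$. The orbit map $\lambda \mapsto \lambda\chi$ from ${\rm Irr}(G/G')$ to ${\rm Orb}_{G'}(\chi)$ is surjective by definition, and its fibers are the cosets of the stabilizer ${\rm Stab}_{G'}(\chi)$, which is trivial by the previous paragraph; hence $|{\rm Orb}_{G'}(\chi)| = |{\rm Irr}(G/G')| = |G:G'|$. Since ${\rm Irr}_{\rm qua}(G)$ is a disjoint union of such orbits, $|G:G'|$ divides $|{\rm Irr}_{\rm qua}(G)|$. The same computation verbatim, using Corollary~\ref{1148a} in place of Theorem~\ref{1149b} (or simply noting that primitive characters are quasi-primitive), shows $|G:G'|$ divides $|{\rm Irr}_{\rm pri}(G)|$.

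I do not anticipate a serious obstacle here: once Theorem~\ref{1149b} and Theorem~\ref{1150c} are in hand, the freeness of the action is immediate and the orbit-counting is routine. The one point that warrants a line of care is confirming that the ${\rm Irr}(G/G')$-action genuinely preserves the sets ${\rm Irr}_{\rm pri}(G)$ and ${\rm Irr}_{\rm qua}(G)$ (so that the orbits actually live inside these sets), but this is exactly the induced-action remark made in the introduction and needs no further work. Passing from the two separate divisibilities to the statement about the gcd is then a triviality, since any common divisor bound that divides both integers divides their greatest common divisor.
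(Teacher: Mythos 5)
Your proposal is correct and follows essentially the same route as the paper: both establish that the action of ${\rm Irr}(G/G')$ on ${\rm Irr}_{\rm pri}(G)$ and on ${\rm Irr}_{\rm qua}(G)$ is semiregular (free) via Theorems~\ref{1150c} and \ref{1149b} and Corollary~\ref{1148a}, and then conclude by orbit counting. Your write-up merely spells out the orbit-size computation that the paper compresses into the phrase ``fundamental counting principle.''
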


\begin{proof} By Theorems \ref{1150c} and \ref{1149b} and Corollary \ref{1148a}, one can see that the action of ${\rm Irr}(G/G')$ on ${\rm Irr}_{\rm pri}(G)$ is semiregular and so is on ${\rm Irr}_{\rm qua}(G)$. Now the result follows by fundamental counting principle. 
	
\end{proof}

 \section{Final Remarks}
 
 It is known in non solvable groups that quasi-primitive characters are not necessarily primitive. Similarly, the converse of \ref{1149b} is not true. For instance an irreducible character of $S_4$ of degree 3 is not quasi-primitive and yet its restriction to $S_4^{'}=A_4$ is irreducible. Let $G$ be a finite group and denote the set $\{\chi\in {\rm Irr}(G),\text{ }{\bf V}(\chi)G'=G\}$ by ${\rm Irr}_{\rm G^{'}}(G)$. By Theorems \ref{1150c} and \ref{1149b}, we have
 $$
 {\rm Irr}_{\rm pri}(G)\subseteq {\rm Irr}_{\rm qua}(G)\subseteq {\rm Irr}_{\rm G^{'}}(G). 
 $$

\medskip

\section*{Acknowledgment}
The first author is supported by the Algerian Ministry of Higher Education and Scientific Research Scholarship and China Scholarship Council (CSC). The second author is supported by Natural Science Foundation of China (No. 11971189, No. 12161035)

\end{document}